\def \RR {\mathbb R}
\def \cF {\mathcal F}
\newtheorem{theorem}{Theorem}
\newtheorem{lemma}[theorem]{Lemma}
\newtheorem{proposition}[theorem]{Proposition}
\newtheorem{remark}[theorem]{Remark}
\def\myffrac#1#2 in #3{\raise 2.6pt\hbox{$#3 #1$}\mkern-1.5mu\raise 0.8pt\hbox{$
#3/$}\mkern-1.1mu\lower 1.5pt\hbox{$#3 #2$}}
\begin{document}

\title{One more proof of the Alexandrov-Fenchel inequality}
\author{D. Cordero-Erausquin, B. Klartag, Q. Merigot, F. Santambrogio}
\date{}
\maketitle

\setcounter{section}{1}

\medskip
This short text has the modest goal of presenting another proof of the Alexandrov-Fenchel inequality for mixed volumes. It is similar to the proof for polytopes discussed  in the recent work of Shenfeld and van Handel~\cite{SvH} which is mainly devoted to a short and neat treatment of the  case of smooth bodies.  Our presentation puts the emphasis on the basic algebraic properties of the polynomials involved in the construction of mixed volumes. Actually, once elementary and classical geometric and algebraic properties have been recalled, our argument reduces to the simple  Proposition~\ref{main-prop} below.

\medskip
The history of the Alexandrov-Fenchel inequality and its various proofs until the 1980s is described in the book by Burago and Zalgaller \cite[Section 20.3]{BZ}.
The
more recent literature contains a proof by Wang \cite{W} that was inspired by Gromov's work \cite{Gro}, in addition to the proof by  Shenfeld and van Handel \cite{SvH}.
%A proof using a geometric flow  was proposed by Andrews in his Ph.D. thesis \cite{Andrews}.
Applications to combinatorics are described by Stanley \cite{St}.

%. It is inspired by G\aa rding's theory of hyperbolic polynomials \cite{garding}, though this theory is not directly applied here beyond the trivial case of quadratic polynomials.

\medskip
The Minkowski sum of two sets $A, B \subseteq \RR^n$
is $A + B = \{ a + b \, ; \, a \in A, b \in B \}$, and we also write $t A = \{ t x \, ; \, x \in A \}$ for $t \in \RR$. Minkowski has shown that when $K_1,\ldots, K_N \subseteq \RR^n$ are convex bodies, the function
\begin{equation}  \RR^N_+ \ni (t_1,\ldots, t_N) \mapsto Vol_n \left( \sum_{i=1}^N t_i K_i \right) \label{eq_849} \end{equation}
is a homogeneous polynomial of degree $n$. Particular cases were discussed earlier by the 19th century geometer Jacob Steiner.
Here,
$\RR^N_+ = \{ x \in \RR^N \, ; \, \forall i, x_i > 0 \}$, a convex body is a compact, convex set with a non-empty interior and $Vol_n$ is $n$-dimensional volume.

\medskip
The {\it mixed volumes} are defined as the coefficients of the homogeneous polynomial in (\ref{eq_849}). Specifically, the mixed volume of $n$ compact, convex sets $K_1,\ldots,K_n \subseteq \RR^n$ is
\begin{align*}  V(K_1,\ldots,K_n)
%& = \frac{1}{n!} \sum_{k=1}^n (-1)^{n+k}
%\sum_{i_1 < \ldots < i_k} Vol_n( K_{i_1} + \ldots + K_{i_n} ) \\ &
 = \frac{1}{n!} \left. \frac{\partial^n}{\partial t_1 \partial t_2 \ldots \partial t_n} Vol_n \left( \sum_{i=1}^n t_i K_i \right) \right|_{t_1=\ldots=t_n = 0} \label{eq_851} \end{align*}
The reader is referred to Schneider's book \cite[Chapter 5]{S} for
explanations and for the basic properties of the mixed volumes.
The mixed volume $V(K_1,\ldots,K_n)$ is symmetric (invariant under permuting the convex bodies),
 continuous in the Hausdorff metric, it satisfies $V(K,\ldots,K) = Vol_n(K)$, it is multi-linear with respect to Minkowski addition, and it is invariant under translating the convex bodies. It turns out that $V(K_1,\ldots,K_n)$ is always non-negative and monotone increasing (with respect to inclusion) in each of its arguments. When $K_1,\ldots,K_n \subseteq \RR^n$ have a non-empty interior, their mixed volume is in fact positive.

 \medskip
 The Alexandrov-Fenchel inequality from the 1930s states that
\begin{equation}
V(K_1, K_2, K_3,\ldots,K_n)^2 \geq
V(K_1, K_1, K_3,\ldots,K_n) V(K_2, K_2, K_3,\ldots,K_n) \label{eq_859}
\end{equation}
for any convex bodies $K_1,\ldots,K_n \subseteq \RR^n$. In other words, this inequality states that the function
\begin{equation}  t \to V(K_1 + t K_2, K_1 + t K_2, K_3,\ldots,K_n) \qquad \qquad (t > 0) \label{eq_1125}
\end{equation}
is a quadratic polynomial with a non-negative discriminant.
Alternatively, the Alexandrov-Fenchel inequalities express  inequalities 
of Brunn-Minkowski type for mixed volumes. Given $m=2,3, \ldots, n$, and $(n-m+2)$ convex bodies $K_{1}, \ldots, K_{n-m+2}\subset \RR^n$, 
it follows from the Alexandrov-Fenchel inequality that the function
\begin{equation}\label{m-concavity}
t \longrightarrow f_m(t):=V(\underbrace{K_{1}+tK_2, \ldots, K_{1}+tK_2}_{m \textrm{ times}}, K_{3}, \ldots, K_{n-m+2})^{1/m} \quad \textrm{ is concave on $\RR^+$}.
\end{equation}
The inequality $f_m ''(0)\le 0$ in the case  $m=2$ is precisely equivalent to the Alexandrov-Fenchel inequality~\eqref{eq_859}, and it easily implies the other cases. The case $m=n$ is the Brunn-Minkwoski inequality for convex sets.

\medskip
We will prove, by induction on the dimension $n$, that the functions $f_m$ are concave. To be precise, assuming the Alexandrov-Fenchel inequalities in dimension $\le n-1$, we will prove that in dimension $n$ and for $m\ge 3$, the function $f_m$ is concave. Crucial but elementary properties of hyperbolic polynomials allow us to proceed to the desired case $m=2$, namely:

\begin{lemma}[``$m=3\Rightarrow m=2$"]\label{m32}
In order to prove the
Alexandrov-Fenchel inequality
\eqref{eq_859}
in $\RR^n$, it suffices to establish property~\eqref{m-concavity} with $m=3$ for all convex bodies in $\RR^n$.
\end{lemma}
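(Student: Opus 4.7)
The plan is to re-encode the $m = 3$ concavity hypothesis as a concavity statement for a three-variable cubic form on $\RR^3_+$, and then extract the Alexandrov-Fenchel inequality via the reverse Cauchy-Schwarz inequality for Lorentzian quadratic forms -- this being the elementary property of hyperbolic polynomials alluded to in the introduction.

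First I would reduce to the case where all bodies $K_1, \ldots, K_n$ have non-empty interior, so that every mixed volume in sight is strictly positive; the general case then follows by Hausdorff-continuity of mixed volumes. Fixing $K_4, \ldots, K_n$, I would introduce the homogeneous cubic
\[
p(\alpha, \beta, \gamma) := V\bigl(\alpha K_1 + \beta K_2 + \gamma K_3,\, \alpha K_1 + \beta K_2 + \gamma K_3,\, \alpha K_1 + \beta K_2 + \gamma K_3,\, K_4, \ldots, K_n\bigr),
\]
with symmetric trilinear polarization $\tilde p$ satisfying $\tilde p(e_i, e_j, e_k) = V(K_i, K_j, K_k, K_4, \ldots, K_n)$ at the standard basis $e_1, e_2, e_3$ of $\RR^3$. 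The first key step is to show $p^{1/3}$ is concave on $\RR^3_+$: for any $x, y \in \RR^3_+$ the Minkowski combinations $A_x := \sum_{i=1}^3 x_i K_i$ and $A_y := \sum_{i=1}^3 y_i K_i$ are convex bodies in $\RR^n$, and the $m = 3$ hypothesis applied to $A_x, A_y, K_4, \ldots, K_n$ gives concavity of $t \mapsto p(x + ty)^{1/3}$ on $[0, \infty)$; the $1$-homogeneity of $p^{1/3}$ upgrades this line-wise concavity to concavity on the whole orthant.

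Next I would extract the Lorentzian structure. A direct Hessian computation for $p^{1/3}$ at a point $e$ in the interior of $\RR^3_+$ yields
\[
\tilde p(e, e, x)^2 \geq \tilde p(e, e, e)\, \tilde p(e, x, x) \qquad \text{for all } x \in \RR^3.
\]
Setting $Q_e(x) := \tilde p(e, x, x)$, one has $Q_e(e) = p(e) > 0$ while the displayed inequality forces $Q_e(x) \leq 0$ whenever $\tilde p(e, e, x) = 0$; hence $Q_e$ has at most one positive eigenvalue, i.e.\ it is Lorentzian, and so it satisfies the reverse Cauchy-Schwarz inequality on the connected component of $\{Q_e > 0\}$ containing $e$.

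The final step is to choose $e = \alpha e_1 + \beta e_2 + e_3$ with $\alpha, \beta > 0$ and apply reverse Cauchy-Schwarz to $(u, v) = (e_1, e_2)$. Positivity of mixed volumes of bodies with non-empty interior gives $Q_e(e_1), Q_e(e_2) > 0$; moreover, along each segment $t \mapsto (1-t) e + t e_i$ the associated Minkowski combination has strictly positive coefficients throughout $[0, 1]$, so $Q_e$ remains strictly positive along the segment and $e_1, e_2$ lie together with $e$ in the same forward cone. Reverse Cauchy-Schwarz then yields $\tilde p(e, e_1, e_2)^2 \geq Q_e(e_1)\, Q_e(e_2)$, and letting $\alpha, \beta \to 0^+$ and unfolding the polarization produces exactly the Alexandrov-Fenchel inequality
\[
V(K_1, K_2, K_3, K_4, \ldots, K_n)^2 \geq V(K_1, K_1, K_3, K_4, \ldots, K_n)\, V(K_2, K_2, K_3, K_4, \ldots, K_n).
\]
I expect the main obstacle to be the passage from the scalar Hessian inequality to the Lorentzian signature of $Q_e$ and the verification that $e_1, e_2$ lie in its forward cone; both ultimately rest on the positivity of mixed volumes for convex bodies with non-empty interior noted in the excerpt.
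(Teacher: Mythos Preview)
Your proof is correct and follows essentially the same route as the paper: both introduce the symmetric trilinear form on $\RR^3$ obtained by freezing $K_4,\ldots,K_n$, deduce concavity of the cube root of the associated cubic from the $m=3$ hypothesis, and then extract the Alexandrov--Fenchel inequality by the hyperbolic/Lorentzian property of that cubic, finishing with a limit to reach the basis vectors. The only difference is cosmetic: the paper invokes H\"ormander's Proposition~A1 as a black box for the passage from $1/3$-concavity to the reverse Cauchy--Schwarz inequality $\tilde f(x,y,z)^2 \ge \tilde f(x,x,z)\,\tilde f(y,y,z)$, whereas you unpack that step explicitly via the Hessian computation and the signature argument for $Q_e$.
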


\begin{proof} We rely on an elementary linear algebra statement from the
	Appendix of H\"ormander's book~\cite{H1}. Introduce  the symmetric $3$-linear form $\tilde f$ on $\RR^3$, defined on the cone $(\RR_+)^3$ by
	$$\tilde f(x,y,z) = V(x_1 K_1 + x_2 K_2 + x_3 K_3\, , \, y_1 K_1 + y_2 K_2 + y_3 K_3\, , \, z_1 K_1 + z_2 K_2 + z_3 K_3, K_4, \ldots , K_n)$$
	for $x,y,z\in (\RR_+)^3$, and the associated $3$-form $f(x) = \tilde f(x,x,x)$, $x\in \RR^3$. Our assumption is that for $x,y \in (\RR_+)^3$, the function $t \to f(x+ ty)^{1/3}$ is concave on $\RR^+$, which is property $(i)'$ of~\cite[Proposition A1]{H1}. This implies property $(iii)$
of~\cite[Proposition A1]{H1} which gives that
	$$\tilde f(x,y,z)^2 \ge \tilde f(x,x,z) \tilde f(y,y,z), \qquad \forall x,y,z \in (\RR_+)^3$$
	and in turn implies (and is equivalent) to the Alexandrov-Fenchel inequalities~\eqref{eq_859}, for instance by picking (at the limit) $x=(1,0,0)$, $y=(0,1,0)$ and $z=(0,0,1)$.
\end{proof}

%We will include an alternate proof below.

As we said, we prove the Alexandrov-Fenchel inequality (\ref{eq_859}) by induction on the dimension $n$. The base case of our induction is the case $n=2$, in which it is well-known that
the
Alexandrov-Fenchel inequality follows from the Brunn-Minkowski inequality. We do not provide here an alternative proof for the
Alexandrov-Fenchel inequality in two dimensions.

\medskip
Thus, assume that we are given $n \geq 3$, and that the Alexandrov-Fenchel inequality is already proven in dimension $n-1$.
Our goal is to prove that, given $m\ge 3$ and $m+n-2$ convex bodies, the function $f_m$ above is concave (the case $m=3$ suffices).
By translating the convex bodies, it suffices to prove (\ref{m-concavity}) under the additional assumption that the origin belongs to the relative interior of the $K_i$.
By continuity with respect to the Hausdorff distance, it suffices to prove (\ref{m-concavity}) in the case where $K_1, \ldots, K_{n-m+2} \subseteq \RR^n$ are polytopes containing the origin in their interior.
 In fact, according to \cite[Theorem 2.4.15]{S} we may even assume that these polytopes are {\it simple strongly isomorphic}.

 \medskip Let us briefly explain the definition and the basic properties of simple, strongly-isomorphic polytopes, using Schneider \cite[Chapter 2]{S} as our main reference.  A polytope in $\RR^n$ is simple if each of its vertices is contained in exactly $n$ facets.
 The polytopes $K_1,\ldots,K_{n-m+2}$ are strongly isomorphic if  for any $\theta \in S^{n-1}$, the (affine) dimension of the convex sets
 $$ \{ x \in K_i \, ; \, \langle x, \theta \rangle = \sup_{y \in K} \langle y, \theta \rangle \}
 $$
 is the same for all $i=1, \ldots, n-m+2$. Here $S^{n-1} = \{ x \in \RR^n \, ; \, |x| = 1 \}$ is the unit sphere in $\RR^n$, and we write $\langle x, y \rangle$ for the standard scalar product between $x,y \in \RR^n$.

\medskip
 Let
$ u_1,\ldots, u_N \in S^{n-1} $ be the list
of
 outer unit normals to the facets of $K_1$, say. Then this list is also the list of outer unit normals to the facets of $K_i$, for all $i$.
For $h \in \RR^N_+$ we consider the set
$$ K[h] = \left \{ x \in \RR^n \, ; \, \langle x, u_j \rangle \leq h_j \ \text{for} \ j=1,\ldots, N\right \}, $$
which is a convex body containing the origin. 
Denote
$$ C = \{ h \in \RR^N_+ \, ; \, K[h] \textrm{ is strongly isomorphic to } K_1 \}. $$
It is well-known, and stated in the next lemma, that $C$ is a convex cone in $\RR^N$ which provides a parameterization
of the space of all polytopes that are strongly isomorphic to $K_1$ and with zero in their interior. Moreover, this parameterization is linear with respect
to the Minkowski sum: 

\begin{lemma} We have the following properties:
\label{addition} 
\begin{enumerate}
\item[(i)] The set $C \subseteq \RR^N_+$ is an open convex cone with an apex at zero.
\item[(ii)] Each polytope strongly isomorphic to
$K_1$ and containing the origin in its interior takes the form $K[h]$ for a certain uniquely determined $h \in C$.
\item[(iii)] For any $h, h' \in C$,
\begin{equation} K[h + h'] = K[h] + K[h']. \label{eq_1730} \end{equation}
\end{enumerate}
\label{lem_237}
\end{lemma}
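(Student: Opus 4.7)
I would organize the argument around the observation that for simple polytopes, being strongly isomorphic is equivalent to sharing a common normal fan: each $u_j$ appears as a facet normal of every $K_i$, and the $n$-tuples of facet indices meeting at the vertices are the same across the family. With this reformulation, statements (ii) and (iii) become essentially tautological, and the cone and convexity parts of (i) follow from (iii); the real work lies in openness.

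For (ii), uniqueness of $h$ representing a given polytope $P$ is immediate, since the common normal fan forces each $u_j$ to be a facet normal of $P$, so $h_j$ must equal $\sup_{x \in P}\langle x, u_j\rangle$. For existence, given $P$ strongly isomorphic to $K_1$ with the origin in its interior, the outer facet normals of $P$ are exactly $u_1,\ldots,u_N$, and setting $h_j := \sup_{x \in P}\langle x, u_j\rangle > 0$ yields $P \subseteq K[h]$; equality follows because each inequality $\langle x, u_j\rangle \leq h_j$ is active along a facet of $P$ (which lies on the boundary of $K[h]$).

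For (iii), if $h, h' \in C$ then $K[h]$ and $K[h']$ share the normal fan, and the Minkowski sum of two polytopes sharing a common normal fan has the same fan (Schneider \cite[Ch.~2]{S}), with support function equal to the sum of the individual support functions. Hence $K[h] + K[h']$ is simple and strongly isomorphic to $K_1$, with support value $h_j + h'_j$ in each direction $u_j$; this identifies it with $K[h+h']$. For (i), the relation $K[th] = tK[h]$ for $t > 0$ is immediate, so $C$ is a cone; combined with (iii) and the normal-fan stability of Minkowski sums, $h + h' \in C$ as well, so $C$ is convex.

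The main obstacle is openness. Fix $h \in C$ and let $v$ be a vertex of $K[h]$; by simplicity, exactly $n$ constraints are active at $v$, indexed by some $I \subseteq \{1,\ldots,N\}$ with $|I| = n$, and the vectors $\{u_j\}_{j \in I}$ form a basis of $\RR^n$. Then $v$ is the unique solution of the linear system $\langle v, u_j\rangle = h_j$ for $j \in I$, and thus depends linearly, in particular continuously, on $h$. For $h'$ sufficiently close to $h$, the analogous linear systems produce points still satisfying the strict inequalities $\langle v, u_k\rangle < h'_k$ for $k \notin I$, so they remain vertices of $K[h']$. Applied simultaneously to every vertex, this shows that $K[h']$ has the same vertex–facet incidence structure, hence the same normal fan, as $K[h]$; so $h' \in C$, completing the proof of openness.
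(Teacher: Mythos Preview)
Your argument tracks the paper's closely: both prove (ii) via the shared facet normals of strongly isomorphic polytopes, derive (iii) from additivity of support functions together with closure of the strong-isomorphism class under Minkowski sums (the paper cites \cite[Corollary 2.4.12]{S} for this), and read off the cone and convexity parts of (i) from (iii). The only substantive difference is openness: the paper simply cites \cite[Lemma 2.4.13]{S}, while you give a direct perturbation argument.

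That direct argument has a small gap. You show that for $h'$ near $h$ each old vertex type $I$ persists as a vertex of $K[h']$, but this alone does not yield that $K[h']$ has the \emph{same} vertex--facet incidence structure; you must also exclude new vertices. The fix is short: for any $n$-subset $J \subseteq \{1,\ldots,N\}$ with $\{u_j\}_{j\in J}$ linearly independent but $J$ not a vertex type of $K[h]$, simplicity forces the intersection point $v_J(h)$ of the corresponding hyperplanes to violate strictly some constraint $\langle \cdot, u_k\rangle \le h_k$, and by continuity $v_J(h')$ still violates it for $h'$ close to $h$. Since there are only finitely many such $J$, a single neighborhood of $h$ works, and on it the vertex set of $K[h']$ is exactly $\{v_I(h')\}_I$ with the same active sets, hence the same normal fan.
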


\begin{proof} Begin with the proof of (ii). Each polytope strongly isomorphic to
$K_1$ has 
$u_1,\ldots,u_N$ as the unit outer normal to its facets. If such polytope contains 
 the origin in its interior, then it takes the form $K[h]$ for a certain uniquely-determined $h \in \RR^n_+$. By definition, $h \in C$, and
(ii) is proven. Next, it is clear that $t h \in C$ whenever $t > 0$ and $h \in C$. We need to prove that $C$ is open and convex and that (\ref{eq_1730}) holds true. First, note that when $K[h]$ is strongly isomorphic to $K_1$, the polytope $K[h]$ has a facet whose outer normal is $u_j$, and hence
	$$ h_j = \sup_{x \in K[h]} \langle x, u_j \rangle \qquad \qquad \text{for all} \ j. $$
	Given $h, h' \in C$, the convex set
	$$ K[h] + K[h'] $$
	is strongly isomorphic to $K_1$, according to  \cite[Corollary 2.4.12]{S}. It follows from (ii) that $K[h] + K[h'] = K[h'']$ for a certain $h'' \in C$.
	In order to show that $h'' = h + h'$, we note that
	\begin{equation*}
	 h''_j = \sup_{x \in K[h] + K[h']} \langle x, u_j \rangle = \sup_{x \in K[h]} \langle x, u_j \rangle + \sup_{x \in K[h']} \langle x, u_j \rangle = h_j + h'_j. \end{equation*}	
Thus $h'' = h + h' \in C$, and $C$ is convex. Moreover, $K[h] + K[h'] = K[h + h']$ and (iii) is proven.  The fact that $C$ is open follows from the simplicity of the polytope $K[h]$ for any $h \in C$, see \cite[Lemma 2.4.13]{S}. This completes the proof of (i), and the lemma is proven. 
\end{proof}

We conclude from Lemma \ref{lem_237} that each $K_i$ takes the form $K_i = K[h]$ for a certain $ h = h^{(i)} \in \RR^N_+$. We will study the function
$$\mathcal F(h):= V(\underbrace{K[h], \ldots, K[h]}_{m \textrm{ times}}, K_{3}, \ldots, K_{n-m+2}), \qquad h\in C \subset \RR^N.$$
With Lemma \ref{lem_237} in hand, it is clear that when working with our simple strongly isomorphic polytopes $K_i$, the concavity of  $f_m$ on $\RR^N_+$ is \emph{equivalent} to the concavity of the function $\cF^{1/m}$ on the cone $C$, since
$$ f_m(t) = \cF^{1/m}(h^{(1)}+th^{(2)}).$$
For proving that $f_m$ is concave on $\RR^N_+$, it is enough to prove that the function   $\mathcal F^{1/m}$ is concave on $C$. We will do it locally by analyzing its Hessian.

\medskip
A crucial property of $\mathcal F$ is that it is homogeneous of degree $m$ on $C$; actually, $\mathcal F$ may be extended to a homogeneous polynomial of degree $m$ in $\RR^N$ as is explained in \cite[Chapter 5]{S}. Other properties that follow from the geometry of polytopes  will allow us to perform induction on the dimension.
For a convex body $K \subseteq \RR^n$ we write
$$ F_j(K) = \left \{ x \in K \, ; \, \langle x, u_j \rangle = \sup_{y \in K} \langle y, u_j \rangle \right \} $$
for the facet whose normal is $u_j$. In our case, when $K = K_i$, the facets $F_1(K),\ldots, F_N(K)$ have positive $(n-1)$-dimensional volume. It is well-known (e.g. follows from~\cite[Lemma 5.1.5]{S}) that
$$ \partial^i \mathcal F := \frac{\partial \mathcal F}{\partial h_i} =  \frac{m}{n} \cdot V(\underbrace{F_i(K[h]), \ldots, F_i(K[h])}_{m-1 \textrm{ times}}, F_i(K_{3}), \ldots, F_i(K_{n-m+2})) $$
where this is a mixed volume of $(n-1)$ bodies in dimension $n-1$. The facets satisfy the property of Lemma~\ref{addition}, that is, for $h,h'\in C$,
$$F_i(K[h+h'])=F_i(K[h]) + F_i(K[h']).$$
This follows from Lemma~\ref{addition} and \cite[Theorem 1.7.5]{S}. Therefore, by the induction hypothesis (namely the concavity of a certain function $t\to f_{m-1}(t)$ for convex bodies in $\RR^{n-1}\simeq u_i^\perp$), the function $(\partial^i\mathcal F)^{1/{(m-1)}}$ is concave on $C$, and so in particular, $\partial^i\mathcal F$ is log-concave on $C$.
 Similarly
$$ \partial^{ij} \mathcal F := \frac{\partial^2 \mathcal F}{\partial h_i \partial h_j} = c_{ij} \cdot V(\underbrace{F_{ij}(K[h]), \ldots, F_{ij}(K[h]) }_{m-2 \textrm{ times}}, F_{ij}(K_3),\ldots, F_{ij}(K_{n-m+2})) $$
where $F_{ij}(K) = F_i(K) \cap F_j(K)$ and where $c_{ij} = (m/n) \cdot ((m-1)/(n-1)) / \sqrt{1 - \langle u_i, u_j \rangle^2}$ is a non-negative coefficient. Note that $\partial^{ij} \mathcal F$ is non-negative for all $i \neq j$. Moreover, $\partial^{ij} \mathcal F(h) > 0$ whenever the facets $F_i(K[h])$ and $F_j(K[h])$ intersect in an $(n-2)$-dimensional face. We conclude that the Hessian matrix $\nabla^2 \mathcal F$ is  {\it irreducible:}
%has sufficiently many non-zero entries, in the sense that it is
For any two indices $i,j$ there is a chain of indices $i, i_1,\ldots, i_L, j$ such that $\partial^{i i_1 } \cF(h) > 0, \partial^{i_1 i_2} \cF(h) > 0,\ldots, \partial^{i_L j} \cF(h) > 0$.
The conclusion now follows from the following simple abstract Proposition (with $p=m$ and $f=\mathcal F$), which is the core of our argument.

\begin{proposition}\label{main-prop}
 Let $C \subseteq \RR^N_+$ be an open, convex cone
	 and let $p \in (2,+\infty)$.
	Let $f: C \rightarrow (0, \infty)$ be a smooth function such that	\begin{enumerate}
		\item[(i)] The function $f$ is $p$-homogeneous.
		\item[(ii)] $\partial^i f$ is positive and log-concave for $1 \leq i \leq N$ (in other words, $\log (\partial^i f)$ is concave).
		\item[(iii)] $\partial^{ij} f$ is non-negative for all distinct $i,j \in \{1,\ldots,N\}$. Moreover, sufficiently many of these numbers are strictly positive, so that the Hessian matrix $\nabla^2 f$ is irreducible.
		\end{enumerate}	
	Then the function $f^{1/p}$ is concave on the convex cone $C$.
	\label{cor_1142}
\end{proposition}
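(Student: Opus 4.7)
Fix $h \in C$ and set $a := \nabla f(h)$, $A := \nabla^2 f(h)$. The plan is to reduce concavity of $f^{1/p}$ on $C$ to a spectral statement about $A$ at each $h$. A direct chain-rule computation gives
\[ p^2 f(h)^{2 - 1/p} \nabla^2(f^{1/p})(h) \;=\; p f(h)\, A - (p-1)\, a a^T, \]
so concavity at $h$ is equivalent to $M := p f(h) A - (p-1) a a^T$ being negative semidefinite. Euler's identities coming from the $p$-homogeneity (i) give $\langle a, h\rangle = p f$, $Ah = (p-1) a$, and hence $\langle Ah, h\rangle = p(p-1) f > 0$ and $Mh = 0$. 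A short rearrangement then converts $M \leq 0$ into the reverse Cauchy--Schwarz
\[ \langle Ah, h\rangle \, \langle A\xi, \xi\rangle \;\leq\; \langle Ah, \xi\rangle^2 \qquad \forall\, \xi \in \RR^N. \]
By the elementary linear algebra from the Appendix of H\"ormander~\cite{H1} already invoked in the proof of Lemma~\ref{m32}, this inequality for every $\xi$ is equivalent, given $\langle Ah, h\rangle > 0$, to the symmetric matrix $A$ having at most one strictly positive eigenvalue. The task thus reduces to showing that $\nabla^2 f(h)$ has at most one positive eigenvalue at each $h \in C$.

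Existence of at least one positive eigenvalue is immediate from (iii) via the Perron--Frobenius theorem for Metzler irreducible matrices: since $A$ is symmetric, irreducible, with non-negative off-diagonal entries, it admits a simple largest real eigenvalue $\lambda_1$ with a strictly positive eigenvector, and $\langle Ah, h\rangle > 0$ together with $h > 0$ forces $\lambda_1 > 0$.

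Ruling out a \emph{second} positive eigenvalue is the heart of the matter, and it is here that assumption (ii) must enter decisively, since irreducibility and non-negative off-diagonals alone do not bound the number of positive eigenvalues. The approach I would pursue is to extract from (ii) a pointwise matrix inequality on $A$: log-concavity of $\partial^i f$ gives, for each $i$ and each $\xi$,
\[ a_i \, \langle \nabla^2 \partial^i f \cdot \xi, \xi\rangle \;\leq\; \langle \nabla \partial^i f, \xi\rangle^2 \;=\; (A\xi)_i^2; \]
summing against the positive weights $h_i$ and invoking Euler on the $(p-2)$-homogeneous Hessian $A$ (which gives $\sum_i h_i \partial^{ijk} f = (p-2) A_{jk}$) produces
\[ (p-2)\, A \;\leq\; A W A \quad\text{with}\quad W := \mathrm{diag}(h_i/a_i). \]
After the congruence $\widetilde A := W^{1/2} A W^{1/2}$ this reads $\widetilde A^{\,2} \geq (p-2)\widetilde A$, forcing every eigenvalue of $\widetilde A$ --- and hence, by Sylvester's law of inertia, the sign pattern of the eigenvalues of $A$ --- to avoid the open interval $(0, p-2)$.

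The final step, combining this spectral gap with the Perron--Frobenius structure to conclude that exactly one eigenvalue of $A$ is positive, is the main obstacle. A natural route is a continuity argument: construct a continuous family $f_s$ ($s \in [0,1]$) satisfying (i)--(iii), with $f_1 = f$ and $f_0(h) = (\sum_i \beta_i h_i)^p / p!$ for some $\beta > 0$, whose Hessian is rank one and manifestly has exactly one positive eigenvalue; the spectral gap $(0, p-2)$ derived above then forbids any eigenvalue of $\widetilde A_s$ from crossing continuously from $(-\infty, 0]$ to $[p-2, \infty)$, so the positive-eigenvalue count is constant in $s$ and equal to $1$. The delicate point is producing a deformation preserving the log-concavity (ii) of each $\partial^i f_s$ --- a naive convex combination $(1-s) f + s f_0$ preserves (i) and (iii) but not (ii) in general --- so some geometric interpolation (reflecting the mixed-volume origin of $f$ in the intended application) will likely be required.
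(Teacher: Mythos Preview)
Your approach is essentially the paper's: derive a quadratic matrix inequality for the Hessian by summing the log-concavity estimates against the Euler weights, pass to the congruent matrix $\widetilde A=W^{1/2}AW^{1/2}$, read off a spectral gap, and finish with Perron--Frobenius. The gap in your write-up is not in the strategy but in the constant you extract from~(ii), and this is precisely what forces you into the unfinished continuity argument.

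You use log-concavity of $f_i:=\partial^i f$ in the raw form $\nabla^2 f_i\le (\nabla f_i\otimes\nabla f_i)/f_i$, which after summation gives $(p-2)\,A\le AWA$, hence $\widetilde A^{\,2}\ge (p-2)\widetilde A$ and only the gap $(0,p-2)$. But $f_i$ is also $(p-1)$-homogeneous, and for a positive $q$-homogeneous function, log-concavity is equivalent to concavity of the $1/q$-th power (a two-line computation using Euler's relation $\nabla^2(\log f_i)\cdot h=-\nabla(\log f_i)$ together with Cauchy--Schwarz for the nonnegative form $-\nabla^2\log f_i$). Using this sharper inequality
\[
\nabla^2 f_i \;\le\; \frac{p-2}{p-1}\,\frac{\nabla f_i\otimes\nabla f_i}{f_i}
\]
in place of your raw log-concavity bound, the same summation yields $(p-2)\,A\le \frac{p-2}{p-1}\,AWA$, i.e.\ $\widetilde A^{\,2}\ge (p-1)\widetilde A$, so the spectrum of $\widetilde A$ avoids $(0,p-1)$. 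Now observe (you already computed $Ah=(p-1)a$) that $W^{-1/2}h$ is a strictly positive eigenvector of $\widetilde A$ with eigenvalue exactly $p-1$. By Perron--Frobenius (or the elementary argument for symmetric irreducible matrices with nonnegative off-diagonals) this is the simple top eigenvalue of $\widetilde A$; combined with the gap $(0,p-1)$, every other eigenvalue is $\le 0$. Hence $\widetilde A$, and by Sylvester $A$, has exactly one positive eigenvalue, and your reverse Cauchy--Schwarz follows. The deformation argument is unnecessary.
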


\begin{proof} For any $i$, the function $f_i = \partial^i f$ is log-concave and $(p-1)$-homogeneous, hence $f_i^{1/(p-1)}$ is concave and so
	\begin{equation} \nabla^2 f_i \leq \frac{p-2}{p-1} \frac{\nabla f_i \otimes \nabla f_i}{f_i}. \label{eq_1230} \end{equation}
	Note that $\sum_i x_i \nabla^2 f_i = \sum_i x_i \partial_i (\nabla^2 f) = (p-2) \nabla^2 f$, again by homogeneity.
	By multiplying (\ref{eq_1230}) by $x_i$ and summing over $i$,
\begin{equation}\label{d-b}
 (p-2) \nabla^2 f = \sum_i x_i \nabla^2 f_i \leq \frac{p-2}{p-1} \sum_{i=1}^N x_i \frac{\nabla f_i \otimes \nabla f_i}{f_i}
	= \frac{p-2}{p-1} (\nabla^2 f) D (\nabla^2 f)
\end{equation}
	where $D$ is defined via
	\begin{equation*} D = {\rm diagonal}( x_1/f_1,\ldots,x_N/f_N). \label{eq_1232} \end{equation*}
	 Since $D$ has positive entries, we may multiply from the left and right by $D^{1/2}$, hence
\begin{equation}\label{d-b-2}
 (p-2) D^{1/2} (\nabla^2 f) D^{1/2} \leq \frac{p-2}{p-1} \left[ D^{1/2} (\nabla^2 f) D^{1/2} \right]^2.
 \end{equation}
	We conclude that the matrix $M = D^{1/2} (\nabla^2 f) D^{1/2}$ thus has no spectrum in the interval $(0, p-1)$. The matrix $M$ has non-negative off-diagonal entries, it is irreducible, and it has an eigenvector $D^{-1/2} x$ with non-negative entries, corresponding to the eigenvalue $p-1$.
		By the Perron-Frobenius theorem, this is the simple, maximal eigenvalue; actually, since we are dealing with a symmetric matrix, this can also be proved by elementary computations without using the Perron-Frobenius theorem.
 Hence,
	$$ M \leq (p-1) \frac{(D^{-1/2} x) \otimes (D^{-1/2} x)}{|D^{-1/2} x|^2} = (p-1) \frac{D^{-1/2} (x \otimes x) D^{-1/2}}{p \cdot f}. $$
	Multiplying by $D^{-1/2}$ on the left and on the right, we get as desired
	\begin{equation*} \nabla^2 f \leq  (p-1) \frac{D^{-1} (x \otimes x) D^{-1}}{p \cdot f} = \frac{p-1}{p} \frac{\nabla f \otimes \nabla f}{f}. \qedhere \end{equation*}
\end{proof}

\begin{remark}{\rm
Properties~\eqref{d-b}-\eqref{d-b-2} for our homogeneous function play the role of a second order integration by parts formula ({\sl \`a la} Bochner). }
\end{remark}

\begin{remark} {\rm Had we had the Proposition in the case $p=m=2$,  we could have deduced the Alexandrov-Fenchel inequalities directly without having to use Lemma~\ref{m32}. For this, one needs to find a proper replacement for the condition $(ii)$, which as such seems too weak  in the case $p=2$ (the function $\partial^i f$ is linear). We leave this as question to the interested reader. }
\end{remark}

{\it Acknowledgement.} We thank Ramon van Handel for his comments on an earlier version of this text.

\bigskip
\bigskip

{\small
 DCE: Institut de Math\'ematiques de Jussieu, Sorbonne Universit\'e, 4 place Jussieu, 75252 Paris, France.

\medskip
BK: Department of Mathematics, Weizmann Institute of Science, Rehovot 76100, Israel.
	
\medskip
QM: Laboratoire de Math\'ematiques d'Orsay, Univ. Paris-Sud, CNRS, Universit\'e
Paris-Saclay, 91405 Orsay, France.

\medskip
FS: Institut Camille Jordan, Universit\'e Claude Bernard - Lyon 1, 43 boulevard du 11 novembre 1918, 69622 Villeurbanne cedex France.
}
\end{document}